\documentclass[11pt, oneside]{amsart}
 \usepackage[text={5.58in,8.5in},centering,letterpaper,dvips]{geometry}
\usepackage{graphicx}
\usepackage{amsfonts}
\usepackage[dvipsnames]{xcolor}
\usepackage{subcaption}
\usepackage{epsf}
\usepackage{amssymb}
\usepackage{amsmath}
\usepackage{amscd}
\usepackage{pdfpages}
\usepackage{fancyhdr}
\usepackage{setspace}
\usepackage{url}
\usepackage{overpic}
\usepackage{soul} 
\usepackage[all]{xy}
\usepackage{verbatim}
\usepackage{enumerate}
\usepackage[colorlinks=true, urlcolor=NavyBlue, linkcolor=NavyBlue, citecolor=NavyBlue]{hyperref}

\theoremstyle{theorem}
\newtheorem{theorem}{Theorem}[section]
\newtheorem{proposition}[theorem]{Proposition}

\newtheorem{conjecture}[theorem]{Conjecture}

\newtheorem*{GPRC}{Generalized Property R Conjecture}
\newtheorem*{SGPRC}{Stable Generalized Property R Conjecture}
\newtheorem*{WGPRC}{Weak Generalized Property R Conjecture}

\makeatletter
\newtheorem*{rep@theorem}{\rep@title}
\newcommand{\newreptheorem}[2]{%
\newenvironment{rep#1}[1]{%
 \def\rep@title{#2 \ref{##1}}%
 \begin{rep@theorem}}%
 {\end{rep@theorem}}}
\makeatother

\newreptheorem{theorem}{Theorem}
\newreptheorem{lemma}{Lemma}
\newreptheorem{question}{Question}
\newreptheorem{corollary}{Corollary}
\newreptheorem{proposition}{Proposition}

\theoremstyle{definition}

\newtheorem{remark}[theorem]{Remark}

\makeatletter
\def\@seccntformat#1{%
  \protect\textup{\protect\@secnumfont
    \ifnum\pdfstrcmp{subsection}{#1}=0 \bfseries\fi
    \csname the#1\endcsname
    \protect\@secnumpunct
  }%
}  
\makeatother

\begin{document}

\rhead{\thepage}
\lhead{\author}
\thispagestyle{empty}


\raggedbottom
\pagenumbering{arabic}
\setcounter{section}{0}


\title[]{Some experimental results on stable equivalence of GST Links for the Generalized Property R Conjecture}

\author{Wenjie Diao}
\address{School of Mathematical Sciences, East China Normal University, Shanghai 200241, China}
\email{740997225@qq.com; wjdiao98@stu.ecnu.edu.cn}

\author{Haoqian Pan}
\address{School of Mathematical Sciences,  Key Laboratory of MEA(Ministry of Education) \& Shanghai Key Laboratory of PMMP, East China Normal University, Shanghai 200241, China}
\email{52215500040@stu.ecnu.edu.cn; hqpan@math.ecnu.edu.cn}

\address{China Communications Information \& Technology Group Co., Ltd., Beijing 101399, China}

\author{Chunxing Yan}
\address{School of Finance and Mathematics, Huainan Normal University, West Dongshan Road, Huainan, Anhui, 232038, China}
\email{cxyan@hnnu.edu.cn}

\begin{abstract}
Gompf-Scharlemann-Thompson and Meier-Zupan constructed an infinite family of R-links that are potential counterexamples of the generalized property R conjecture. Their works also show that whether these links are stably handleslide trivial is an interesting open problem related to the Slice-Ribbon conjecture. In this work, we implement an algorithm to construct all these links explicitly, the details of this algorithm will the content of another paper. With such an algorithm, the stable handleslide triviality of some of these links is verified. Moreover, many links are shown to be stably handleslide equivalent. Some of the results are obtained independently in \cite{Knots in the fiber}.
\end{abstract}

\maketitle
\section{Introduction} \label{chap:intro}
The relationship between Dehn surgery on links in $S^3$ and the topology of 4-manifolds has been a central theme in low-dimensional topology since the inception of Kirby calculus. A fundamental question in this area is how the geometric properties of a link reflect differential structure of the 4-manifolds obtained by surgery.
The Generalized Property R Conjecture (GPRC) stands at the intersection of this relationship, providing a bridge between 3-dimensional surgery theory and 4-dimensional topology.

Recall that Property R Conjecture, famously resolved by Gabai \cite{Gab87} in 1987, asserts that if $0$-framed surgery on a knot $K \subset S^3$ yields $S^2 \times S^1$, then $K$ must be the unknot. Gabai's result can be viewed as initial progress toward a positive resolution of another famous conjecture, the Smooth 4-Dimensional Poincare Conjecture(S4PC), since it implies that any closed, simply connected, smooth 4-manifold which admits a handle decomposition without 1-handle and consisting of  a single 2-handle must be diffeomorphic to the standard $S^4$.

The Generalized Property R Conjecture, problem 1.82 in Kirby's problem list \cite{Kirby problem}, extends Property R Conjecture to links with $n$ components.

\begin{GPRC}
	Let $L = K_1 \cup \dots \cup K_n$ be an $n$-component link in $S^3$. If $0$-framed surgery on $L$ yields the connected sum of $n$ copies of $S^2 \times S^1$, denoted by $\#_n(S^2 \times S^1)$, then $L$ is handle-slide equivalent to the $n$-component unlink.
\end{GPRC}

An $n$-component link $L$ with
the property that $0$-surgery on $L$ yields $\#^n S^1\times S^2$ is called an R-link (or an nR-link
when we emphasize the number of components). Thus Gabai's result establishes that unknot is the only 1R-link

In \cite{GST}, the authors studied GPRC with special focus on 2R links. They concentrated on links $L = U\cup V$ where $U$ is fibered. Using Heegaard theory, they completely characterized  such links $L$: After handleslides, $V$ is a special type of curve lying on the fiber surface of $U$. For the square knot $Q_{3,2}$, which is the simplest plausible fibered knot, such characterization can be further explored such that they enumerated all  2R links $L = Q_{3,2} \cup V$ up to handle slides and all such links can be slid to become of a special form, which is later denoted by  $L(3,2; \frac{c}{d})$ by Meier-Zupan in \cite{Generalized square knot} with $c,d$ coprime integers and $d$ even.  We will describe this construction more explicitly in later section. 

By using an unusual family of handlebodies from \cite{Gompf}, Gompf-Scharlemann-Thompson \cite{GST} constructed a family of 2R links denoted by $L_n$, each of which contains the square knot $Q_{3,2}$ as a constituent knot and proposed that $L_n$  probably
does not satisfy the Generalized Property 2R Conjecture. In view of the characterization of such $L_n$ above, Scharlemann \cite{Proposed} later showed that $L_n$ must be handleslide equivalent to  $L(3,2; \frac{2n}{2n+1})$.

The reason  $L_n$ might be counterexamples to GPRC is that if $L_n$ has Property R, then the
trivial group presentation
$$P_n = \langle x,y\ |\ xyx= yxy,x^n
= y^{n+1}\rangle$$
satisfies the Andrews-Curtis Conjecture, which is widely believed not to be the case when $n\ge3$. See \cite{GST} for further details about the Andrews-Curtis Conjecture.

Thus they also proposed weaker and more plausible versions of the Generalized Property R Conjecture, the Stable Generalized Property R Conjecture(sGPRC) and the Weak Generalized Property R Conjecture(wGPRC). They showed that $L_n$ satisfy wGPRC and whether they satisfy sGPRC and GPRC is an interesting open problem.

\begin{SGPRC}
For every R-link
$L$, there is a 0-framed unlink $U$ such that the split union $L\sqcup U$ is handleslide-equivalent
to an unlink.
\end{SGPRC}

\begin{WGPRC}
For every R-link
$L$, there is a 0-framed unlink $U$ and a split collection of Hopf pairs $V$
such that the split union $L\sqcup U \sqcup V$ is handleslide-equivalent to an unlink and a split
collection of Hopf pairs.
\end{WGPRC}

The family $\{L(3,2;\frac{c}{d})\}$ of 2R links, each of which contains the square knot $Q_{3,2}$ as a constituent knot, was later generalized in \cite{Generalized square knot} into nR links $L(p,q;\frac{c}{d})$ for more general coprime $p,q$ and $\frac{c}{d}\in \mathbb Q$ and $c$ even. Each of these $n$ component link $L(p,q;\frac{c}{d})$ contains the generalized square knot $Q_{p,q}$ as a constituent knot with $n = (p-1)(q-1)$. Most of these links also appear to be
potential counterexamples to the GPRC. They also proved that these more general R links satisfy wGPRC. The question whether these links satisfy GPRC or sGPRC is also an interesting open problem.

Much of the difficulty of understanding these links comes from the complexity of the links themselves: even though each of them is link with a constituent knot the generalized square knot, the other components can be quite complicated and they are interleaved in complicated ways. As such, it is plausible to make use computer algorithms and softwares like snapPy to have a better understanding of these links. 

In this work, we focus on the Stable Generalized Property R Conjecture and describe a computer program that generates all  links of the form $L(p,q; \frac{c}{d})$. Since this link is actually well-defined up to handleslides, what we actually construct are certain 2R and 3R sublinks of a representative link in the handleslide equibalence or stable handleslide equivalence class of $L(p,q; \frac{c}{d})$. This well satisfy our goal to understand the handleslide or stable handleslide equivalence of these links: by making use of such a program, we find many interesting stable equivalence between different parameters of $L(p,q; \frac{c}{d})$.

We first gather what's known on the handle slide equivalence of $L(p,q; \frac{c}{d})$:

\begin{itemize}
\item For the 2R links $L_n$, it is proved in \cite{Proposed} that $L_n$ is handleslide equivalent to $L(3,2; \frac{2n}{2n+1})$. 
\item It is known that $L_n$ satisfy GPRC for $n = 0, 1, 2$ and is not known to satisfy GPRC for $n\ge3$; see \cite{Characterizing}.
\item For the R links $L(2k+1,2;\frac{2}{2m+1})$ with $k\in \mathbb N$ and $m\in \mathbb Z$, it is proved in \cite{Knots in the fiber} that they are handleslide trivial and hence satisfy GPRC.
\item 
For the 2R links $L(3,2;\frac{4}{d})$ with $d\in \mathbb Z$, it is proved in \cite{Romary Zupan} that the associated trivial group presentation is Andrew-Curtis trivial, which indicates that they might also satisfy sGPRC. Actually it is conjectured in \cite{Romary Zupan} that they satisfy sGPRC.
\end{itemize}

Our main results are the following:

\begin{theorem}
\

\begin{enumerate}
\item The GST link $L_{n}$ is stably handleslide equivalent to $L(n+1,n;\frac{2}{3})$ for $2\le n\le 36$.
\item The GST link $L(3,2;\frac{4}{d})$ is stably handleslide trivial for $1\le d \le 39$.
\item The GST link $L(3,2;\frac{6}{6n\pm 1})$ is stably handleslide equivalent to $L(3n\pm 1,3;\frac{2}{3})$ for $n \le 7$.
\end{enumerate}
\end{theorem}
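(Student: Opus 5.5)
The theorem is experimental: each of its three statements covers a finite list of parameter values. So the plan is a certified computer search, one certificate per parameter value, rather than a uniform conceptual argument.

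\textbf{Step 1: explicit diagrams.} For each parameter value I would first build an explicit diagram of a representative of $L(p,q;\frac{c}{d})$. I would realize $Q_{p,q}=T_{p,q}\#\overline{T_{p,q}}$ as a fibered knot whose fiber $F$ is the boundary-connected sum of the two torus-knot fibers. The remaining components are obtained, following the Meier--Zupan construction, as curves on $F$ determined by the slope $\frac{c}{d}$. I would push these curves to distinct parallel copies of $F$, so that the surface framing agrees with the $0$-framing.

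To keep the computations small, I would only construct a $2R$ or $3R$ sublink. By \cite{Proposed} and the handleslide characterization in \cite{GST}, the remaining components can be chosen to lie on $F$, disjoint from the ones we keep. After stabilizing with a split unlink, those extra components can be slid off or cancelled, so they do not affect stable handleslide equivalence. The program outputs each diagram as a PD code in SnapPy.

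\textbf{Step 2: the slide search.} Stable handleslide equivalence of $L$ and $L'$ follows from a finite chain of moves:
\begin{itemize}
\item adding or removing split $0$-framed unknots,
\item handleslides of one component over another (band sums along chosen bands),
\item isotopies.
\end{itemize}
The program searches for such a chain.
\begin{enumerate}
\item For (2), the target is an unlink.
\item For (1) and (3), the target is $L(n+1,n;\frac{2}{3})$ or $L(3n\pm1,3;\frac{2}{3})$, respectively.
\end{enumerate}
Concretely, I would add one split unknot and slide it over the components along short bands chosen from the diagram. After each slide I would simplify with SnapPy's randomized Reidemeister moves. I would then test whether the result is a split union, or isotopic to the target, using SnapPy's isometry check on exteriors together with a check that the framings match. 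Once a chain is found, it is recorded as the certificate. Verifying the certificate only requires confirming that each recorded slide is a valid band sum and that the final isometry extends to the correct link and framing.

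\textbf{Main obstacle.} The hard part is finding the bands, since the search space grows combinatorially. My first approach would be to guide the search by the group presentations. For (2), I would mimic the Andrews--Curtis trivialization of \cite{Romary Zupan}: each AC move corresponds to a handleslide, and the stabilization corresponds to adding a generator. For (1) and (3), I would match the relator of the smaller-parameter link with that of the larger one, and use the monodromy of $Q_{p,q}$ to predict the needed slides.

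A secondary difficulty is certifying that the final isotopies are genuine, not merely isometries of the exteriors. To handle this, I would require the simplified diagrams to be identical up to Reidemeister moves, or else invoke the fact that hyperbolic links in $S^3$ are determined by complements together with peripheral data. The parameter bounds in the theorem ($n\le 36$, $d\le 39$, $n\le 7$) are simply the ranges where this computation finished.
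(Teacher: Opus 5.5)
There is a genuine gap, and it sits at the step you yourself call the main obstacle. Your certificate format (an explicit chain of stabilizations, band moves and isotopies) would be valid if one were found. The problem is that the proposal supplies no mechanism that finds one: the band search is open-ended, and the guidance you propose does not work.

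A handleslide does induce an Andrews--Curtis move on the dual presentation, but the converse fails. Relator words only record the dual attaching curves in $\#^n S^1\times S^2$ up to homotopy. So free cancellations along an AC sequence, and its trivial endpoint, need not be realized by isotopies. If every AC move lifted to a handleslide, the AC-triviality established by Romary--Zupan would already give stable handleslide triviality of $L(3,2;\frac{4}{d})$ for every $d$. In fact they only conjecture this.

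Step 1 has a related hole. You say the components outside your chosen $2R$ or $3R$ sublink ``can be slid off or cancelled'' after stabilizing. But deleting a component is not a handleslide, and this is precisely the point that requires a theorem.

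The missing idea is the paper's Proposition 3.1. Suppose $L=Q\cup V_1\cup V_2$ is a $3R$ link and a 2-component sublink $L'$ is a $2R$ link. Then the third component $K$ is a knot in $S^3_{L'}(0)\cong \#^2 S^1\times S^2$ on which surgery yields $\#^3 S^1\times S^2$. By Theorem 5.1 of \cite{Classification}, $K$ is isotopic there to an unknot in a $3$-ball. Isotopies in the surgered manifold are realized by handleslides over $L'$, so $L$ is handleslide equivalent to $L'$ plus a split unknot.

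With this in hand, the paper never has to find a single band. It builds $3R$ representatives $Q_{p,q}\cup V_1\cup V_2$ for the various parameters. It then looks for a $2R$ sublink of one that is \emph{isotopic} to a $2R$ sublink of another, certified by SnapPy's verified isometry signatures. Each match gives a stable equivalence, such as $L(3,2;\frac{4}{d})\sim L(3,2;\frac{4}{d+2})$. Chaining these down to $L(3,2;\frac45)\sim L(3,2;\frac23)$, which is handleslide trivial by \cite{Knots in the fiber}, gives (2). Parts (1) and (3) come from the same kind of sublink matching.

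So the only computation needed is an isotopy test, which is the part your ``secondary difficulty'' paragraph already knows how to certify. The band search you identify as the bottleneck is bypassed entirely.
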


\begin{remark}
\begin{enumerate}
    \item The stable triviality of the link $L(3,2;\frac{4}{d})$ for $1\le d\le 23$ is independently obtain in \cite{Knots in the fiber}.
    \item We find two different ways to verify the stable handleslide triviality of $L(3,2; \frac{4}{d})$: one by verifying that there are a pair of isotopic 2-component links in $L(3,2; \frac{4}{d})$ and $L(3,2;\frac{4}{d+2})$. By showing that $L(3,2;\frac{4}{5})$ is stably handleslide trivial, this shows that each of $L(3,2;\frac{4}{d})$, for $d\le 39$ is stably handleslide trivial. The other way is to verify that $L(3,2;\frac{4}{4n\pm 1})$ is stably handleslide equivalent to $L(2n\pm 1,2 ;\frac{2}{3})$. Since $L(2n\pm 1,2 ;\frac{2}{3})$ is handleslide trivial by \cite{Knots in the fiber}( our experiments also verify this for small $n$), it also follows that $L(3,2;\frac{4}{d})$ is stably handleslide trivial for $d\le 39$. 
\end{enumerate}
\end{remark}

Motivated by the above results, we formulate the following conjectures:
\begin{conjecture}

\begin{enumerate}
    \item The GST link $L(3,2;\frac{6}{6n \pm 1})$ is stably handleslide equivalent to $L(3n\pm 1,3;\frac{2}{3})$ for $n \ge 1$ where we require that $6n \pm1 >6$ and $3n \pm 1 >2$. 
\item The GST link $L(3,2;\frac{4}{4n\pm 1})$ is stably handleslide equivalent to $L(2n\pm 1,2 ;\frac{2}{3})$ for $n \ge 1$ where we require that $4n \pm1 >4$ and $2n \pm 1 >2$.
\end{enumerate}

\end{conjecture}

Since each link $L(3,2; \frac{4}{d})$ for $5\le d\le 39$ is stably handleslide trivial, each knot that shows up in the handleslide equivalence class of  these links is a  ribbon knot. Building on our general algorithm, we are able to incorporate more general handle slide operations programatically to the link $L(p,q;\frac{c}{d})$. This creates many new knot types and new link types in $L(p,q;\frac{c}{d})$. In particular, every new knot type obtained for the class $L(3,2;\frac{4}{d})$ for $q\le d\le 39$ is automatically a ribbon knot by our result. Given the recent work of \cite{Dunfield-Gong} and \cite{Trevor}, we are interested to see if there is some slice knot which is not known to be ribbon showing up in $L(3,2; \frac{4}{d})$ for $1\le d \le 39$.

\subsection*{Acknowledgements}

This problem was one of a series of problems discussed while the first author was visiting Prof. Alexander Zupan, where the first author learned a lot about GPRC, recent progress on it and all the background materials in this work. This work started after the visiting program, but many ideas originated from the discussion during the visiting, for which we express our gratitude to Prof. Alexander Zupan.

\section{Summary of the algorithm} \label{chap:alg}

\subsection{Descrition of the link}
The generalized square knot $Q_{p,q}$ is defined to be $Q_{p,q} = T_{p,q}\# T_{-p,q}$, where $T_{p,q}$ is the $(p,q)$-torus knot with $0 < q < p$. Let $F^{\pm}$ denote the minimal genus Seifert surface for $T_{p,q}$ and $T_{-p,q}$ respectively. Then both $T_{\pm p,q}$ and the generalized square knot $Q_{p,q}$ are fibered knots, with fiber surface $F^{\pm}$ and $F_{p,q} = F^{+}\natural F^{-}$, where $\natural$ denotes the boundary-connected summation. See Figure \ref{fig:Q_4,3} for the case of $Q_{4,3}$. Moreover, the monodromy map $\varphi^{\pm}$ and $\varphi_{p,q}$ of $F^{\pm}$ and $F_{p,q}$ are periodic of order $pq$.

\begin{figure}[h!]  
\centering
\includegraphics[width=0.8\linewidth]{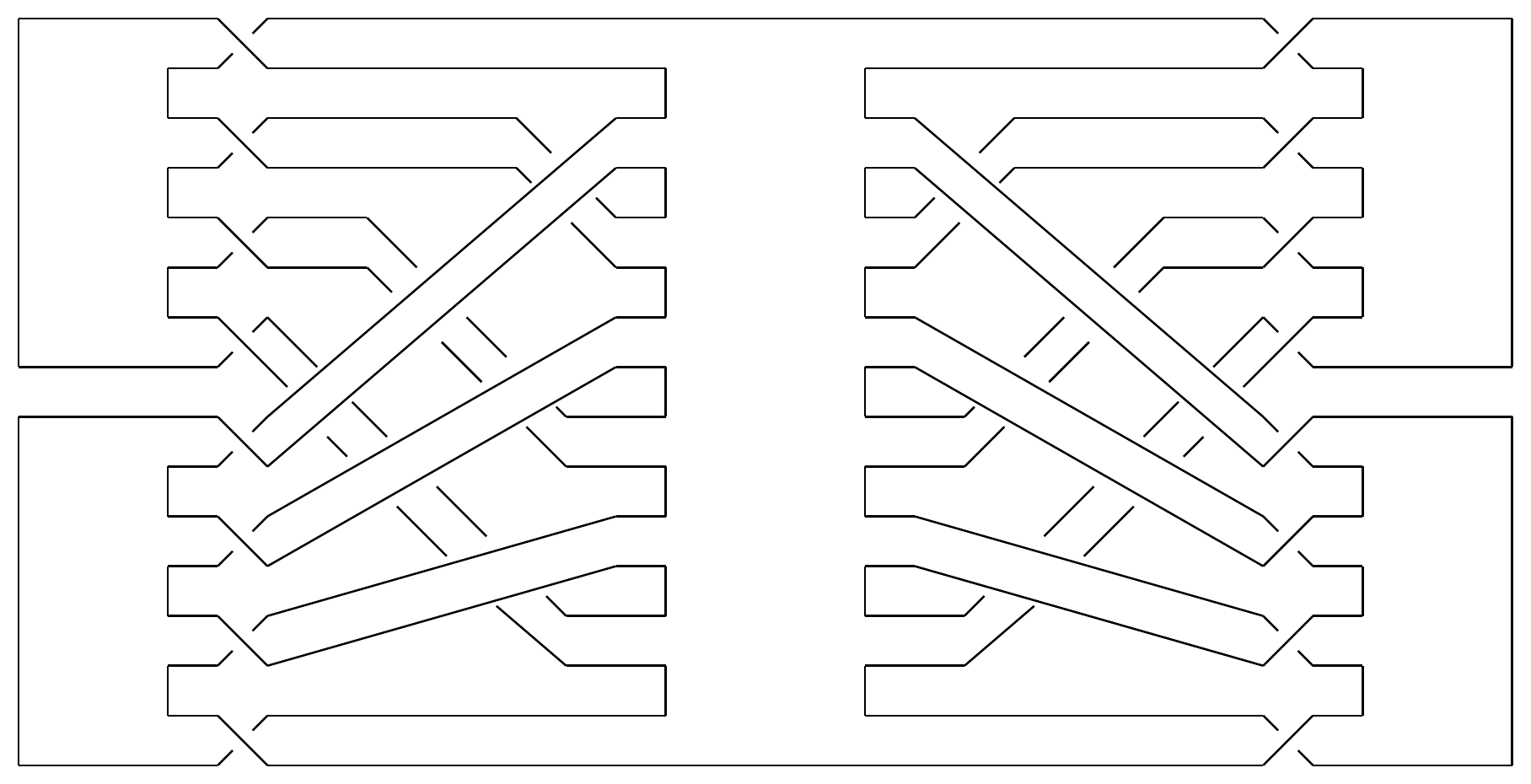}
\caption{The knot $Q_{4,3}$}
\label{fig:Q_4,3}  
\end{figure}

It is explained in \cite{Generalized square knot, GST, Proposed} how to describe this monodromy map $\varphi_{p,q}$, which we describe briefly for our algorithm.

Since $T_{p,q}$ and $T_{-p,q}$ differ by a mirror reflection, it suffices to understand the action of $\varphi^{+}$ on $T_{p,q}$. There is a graph $\Gamma^+$ embedded in the fiber surface $F^+$ which is invariant under $\varphi^+$. By cutting $F^+$ open along the graph $\Gamma^+$, we get an annulus $A^+$, with one boundary component $T_{p,q}$ and the other component a $2pq$-gon coming from $\Gamma^+$. Moreover, the monodromy map $\varphi^+$ acts on the annulus $A^+$ by a $\frac{2\pi}{pq}$ clockwise rotation. Thus, in terms of the number of edges, $\varphi$ is a 2-edge clockwise rotation. For the action of $\varphi^-$ on $F^-$, the situation is similar: cutting $F^-$ along $\Gamma^-$ also gives an annulus $A^-$, with one boundary component $T_{-p,q}$ and the other component a $2pq$-gon coming from $\Gamma^-$. Then $\varphi^-$ acts on $A^-$ by a $\frac{2\pi}{pq}$ clockwise rotation. Gluing $A^+$ and $A^-$ along an arc in $T_{p,q}$ and $T_{-p,q}$ we get a once-punctured annulus $A_{p,q}$ and $\varphi^{\pm}$ extends to $\varphi_{p,q}$, which acts on $A_{p,q}$ also as a $\frac{2\pi}{pq}$ clockwise rotation, with only the difference that $\varphi_{p,q}$ now moves the puncture. However, for our problem, this moving puncture only affects the final realization of the links by handle slides over $Q_{p,q}$, which does not affect much since we are only interested in handleslide and stable handleslide equivalent class. For the following description, we ignore the puncture, so $A_{p,q}$ is just an annulus. 

The quotient space of $A_{p,q}$ by the periodic map $\varphi_{p,q}$ is the so-called pillow case, which is an orbifold $\mathcal O = \mathcal{O}_{p,p,q,q}$ with four cone points of order $p,p,q,q$. The $\frac{c}{d}$ curve in $\mathcal{O}$ is defined as the unique simple closed curve in $\mathcal O$ that avoids all the cone points and winds around horizontally $q$ times and vertically $p$ times, where the orientation is specified by the standard right hand rule.

Then we can lift each of the $\frac{c}{d}$ curve, with $c$ even in $\mathcal O$ into $F_{p,q}$ to get a collection $\mathcal{L}_{p,q}$ of $pq$ disjoint simple closed curves in $A_{p,q}$, depending on the base point of each lift. The first step of the algorithm is to realize this process.

The next step, which is the core part of the algorithm, is to realize the collection of curves $\mathcal{L}_{p,q}$ in $A_{p,q}$ in the Seifert surface $F_{p,q}$ in $S^3$, which we also denote by $\mathcal{L}_{p,q}$. As in the discussion of \cite{Generalized square knot, GST, Proposed}, this realization depends on a process of sliding both ends of one of finitely many model properly embedded arcs in  $F_{p,q}$. Essentially, the $\frac{c}{d}$ parameter decides the number of copies in each model arc. In this step, we first characterize the result of the slide for each model arc. Taking input from the first step, the second step then realizes the actual link in $F_{p,q}$.

In the fiber surface $F_{p,q}$ of genus $(p-1)(q-1)$, every $(p-1)(q-1)$ component sublink of $\mathcal{L}_{p,q}$ that cuts  $F$ into a connected planar surface will be our primary concern. In which case we denote by $L_{\frac{c}{d}}^{p,q} = L$.
Together with $Q_{p,q}$, we get a $(p-1)(q-1)+1$ component link $L(p,q; \frac{c}{d})$ in $S^3$. Since we are mainly interested in stable equivalence, we also denote by $L(p,q;\frac{c}{d})$ any nR link which is stably equivalent to $L(p,q;\frac{c}{d})$ with $n\ge 2$.

Our algorithm explicitly follows the above description to build $L(p,q;\frac{c}{d})$, which can be 2-component or 3-component, whenever we feel it is easier to verify stable equivalence. 
The algorithm itself turns out to be quite technical, so in this paper we only describe what it does. We also believe that some techniques developed in the algorithm might be useful to other problems in knot theory, so we will put the content of the algorithm in another work.

\subsection{$\frac{c}{d}$ path in $A_{p,q}$}

As discussed above, $A_{p,q}$ is an annulus, with each one of the two boundary circles $2pq$-gon, see Figure \ref{fig:cdcircle}. Thus $A_{p,q}$ can be thought of as a regular $2pq$-gon annulus, consisting of $2pq$ disjoint rectangles glued consecutively, in Figure \ref{fig:cdcircle} the rectangles are glued along blue arcs. 

The goal of this step is to lift $\frac{c}{d}$ curve in $\mathcal{O}$ to $A_{p,q}$. There are $pq$ lifts in $A_{p,q}$, depending on the chosen base points. We will label the edges of $2pq$-gon so that this computation can be carried out via arithmetic computation. For example when $p=4$ and $q = 3$, the outer boundary is labeled as in Figure \ref{fig:cdcircle}. 

Each label is a tuple $((a,b,s),t)$ with $1\le a \le p$, $1\le b \le q$, $a,b \in \mathbb{Z}^+ $, $s = + \text{ or } -$ and $t\in \mathbb{Z}_2$. Intuitively, the binary parameter $t$  indicates whether this edge belongs to the outer cicle boundary or not: if the edge belongs to the outer ciecle boundary circle then $t = 0$ and otherwise $t = 1$. Many times the edge with label $((a,b,s),t)$ is denoted for short by $ab_s$ or $(a,b,s)$, since whether the edge belongs to the outer circle boundary is intuitively clear. 

The other parameter $s$ of the label are defined inductively as follows: 

(1)Assigning an arbitrary edge in outer boundary circle $(1,1,+)$; 

(2) For any edge labelled $(a,b,s)$, unless the next edge( counted clockwise) has been labelled already, label it by $(a, b+1 (\text{mod}{q}), s)$, if $s = +$; and lebel the next edge $(a+1 (\text{mod}{p}),b, s)$, if $s = -$. 

(3) When the process (2) terminates, i.e., each edge of the outer circle has been labeled, copy the label $(a,b,s)$ of each edge in outer circle to the corresponding edges in the inner boundary circle, where by the corresponding edge we mean the edge in the inner circle that lies in the same rectangle, with only the $t$ parameter changed from $0$ to $1$.

Thus we labeled the $2pq$-gon annulus $A_{p,q}$. Note that the input $\frac{c}{d}$ has not come into the play yet. 

(4) For any $\frac{c}{d}$, we further introduce an auxiliary parameter $v$ for future use and $v$ takes values in $\{ 1,2, \dots, d\}$. 
We divide each edge of both boundary circles of $A_{p,q}$ into $d+1$  segments of same lenghth by inserting $d$ vertices labeled $1,2,\cdots, d$ consecutively in clockwise order. We denote each of the vertices introduced in this step $((a,b,s),t,v)$ or $v$ when the edge $(a,b,s)$ in which vertex $v$ lies and since whether or not the edge belongs to the outer boundary is intuitively clear , we have omitted $v$ parameter. See Figure \ref{fig:example computation}, where some $v$ parameters are labeled.  

(5) For any edge $((a,b,s),t)$, the next edge of $((a,b,s),t)$, counted clockwise, is denoted by $N(((a,b,s),t))$. If $s = +$, then $N(((a,b,s),t)) = ((a, b+1 (\text{mod} q), -s), t)$. If $s = -$, then $N(((a,b,s),t)) = ((a+1 (\text{mod} p), b, -s), t)$.

\begin{figure}[h]  
\centering
\includegraphics[width=0.6\linewidth]{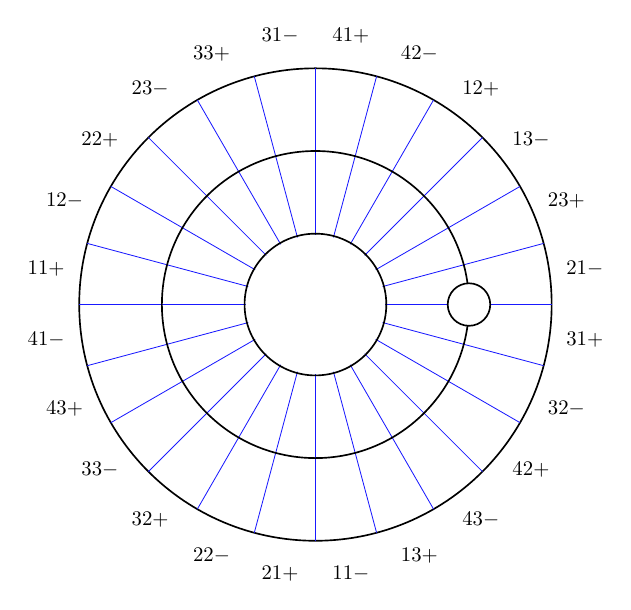}
\caption{$\frac{c}{d}$ circle of $p = 4$ and $q = 3$}
\label{fig:cdcircle}  
\end{figure}

The steps (1)(2)(3) and (4) finishes the labeling process. Now we describe two operations involved in the computation of the lift of 
$\frac{c}{d}$ curve to $A_{p,q}$: translation operator $T$ and reflection operator $R$.

\begin{figure}[h!]  
\centering
\begin{overpic}[width=0.4\linewidth]{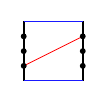}
    \put(15,  33){\color{black}$1$}  
    \put(15,  45){\color{black}$2$}   
    \put(15,  57){\color{black}$3$}   
    \put(82,  33){\color{black}$1$}  
    \put(82,  45){\color{black}$2$}   
    \put(82,  57){\color{black}$3$} 
\end{overpic}
\caption{Translation in the case $\frac{c}{d} = \frac{2}{3}$}
\label{fig:translation case1}  
\end{figure}

\begin{figure}[h!]  
\centering
\begin{overpic}[width=0.4\linewidth]{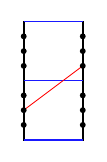}

     \put(10,  20){\color{black}$1$}  
    \put(10,  30){\color{black}$2$}   
    \put(10,  40){\color{black}$3$}   
    \put(55,  20){\color{black}$1$}  
    \put(55,  30){\color{black}$2$}   
    \put(55,  40){\color{black}$3$} 
    \put(10,  20){\color{black}$1$}  
    \put(10,  30){\color{black}$2$}   
    \put(10,  40){\color{black}$3$}   
    \put(55,  58){\color{black}$1$}  
    \put(55,  68){\color{black}$2$}   
    \put(55,  78){\color{black}$3$} 
      \put(10,  58){\color{black}$1$}  
    \put(10,  68){\color{black}$2$}   
    \put(10,  78){\color{black}$3$} 
\end{overpic}
\caption{Translation in the case $\frac{c}{d} = \frac{2}{3}$}
\label{fig:translation case2}  
\end{figure}

For any vertex $((a,b,s),t,v)$ the translation operator $T$ is defined differently depending on $v$ and $\frac{c}{d}$ as follows: 
\begin{enumerate}
    \item If $v+c <d$, then $T(((a,b,s),t,v)) = ((a,b,s),t+1, v+c)$. An example with $\frac{c}{d} = \frac{2}{3}$ is Figure \ref{fig:translation case1};
    \item If $v+c <d$, let $\Delta = \lfloor \frac{v+c}{d}\rfloor$ and $\delta = v+c -\Delta d$. Then $T(((a,b,s),t,v)) = (N^{\Delta}(a,b,s),t+1, v+c)$. An example with $\frac{c}{d} = \frac{2}{3}$ is Figure \ref{fig:translation case2}
\end{enumerate}

For any vertex $((a,b,s),t,v)$ the reflection operator $R$ is defined by $T(((a,b,s),t,v)) = ((a,b,-s),t, d+1-v)$.

Then the result of $\frac{c}{d}$ computation is the sequence of the vertices obtained in the following steps:
\begin{itemize}
    \item step 1. Choose an arbitrary edge in the outer circle, for example $((1,1,+),0)$, and an arbitraty vertex within that edge, for example $((1,1,+),0,1)$.
    \item step 2. Apply the translation operation to the vertex in step 1 to get another vertex. 
    \item step 3. Apply the reflection operation to the vertex obtained in step 2 to get a new vertex.
    \item step 4. Apply the inverse of the translation operation to the vertex in step 3 to get a new vertex. 
     \item step 5. Apply the reflection operation to the vertex obtained in step 4 to get a new vertex.
     \item step 6. Iterate steps 2-5 until we get the vertex we start with in step 1.
     \item step 7. Record each vertex we get in the above process.
\end{itemize}

We include Figure \ref{fig:example computation} to provide intuition for algebraic computation above: essentially the algebraic computation aims to record the geometric configuration of the sequence of red arcs in Figure \ref{fig:example computation} by recording the vertices of each red arc. In this case, $\frac{c}{d} = \frac{2}{3}$, $p=4, q =3$, and the starting vertex is  $((1,1,+),0,1)$. 

\begin{figure}[h!] 
\centering
\includegraphics[width=0.8\linewidth]{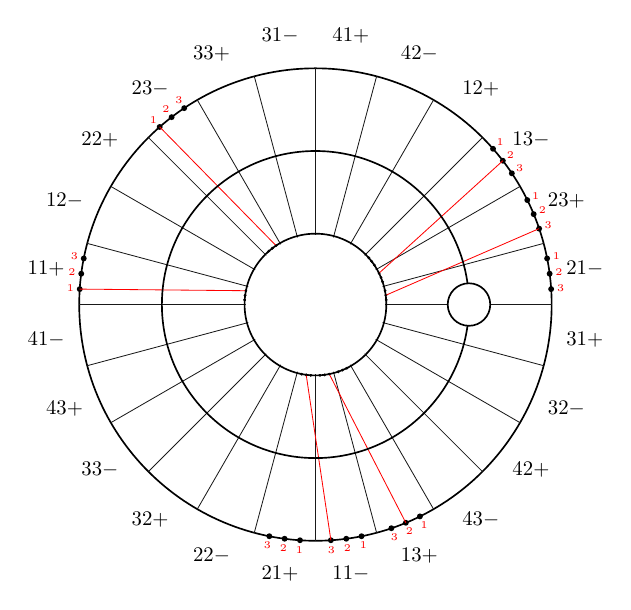}
\caption{An $\frac{c}{d} = \frac{2}{3}$ computation starting at $((1,1,+),0,1)$.}
\label{fig:example computation}  
\end{figure}

\begin{figure}[h!] 
\centering
\includegraphics[width=0.8\linewidth]{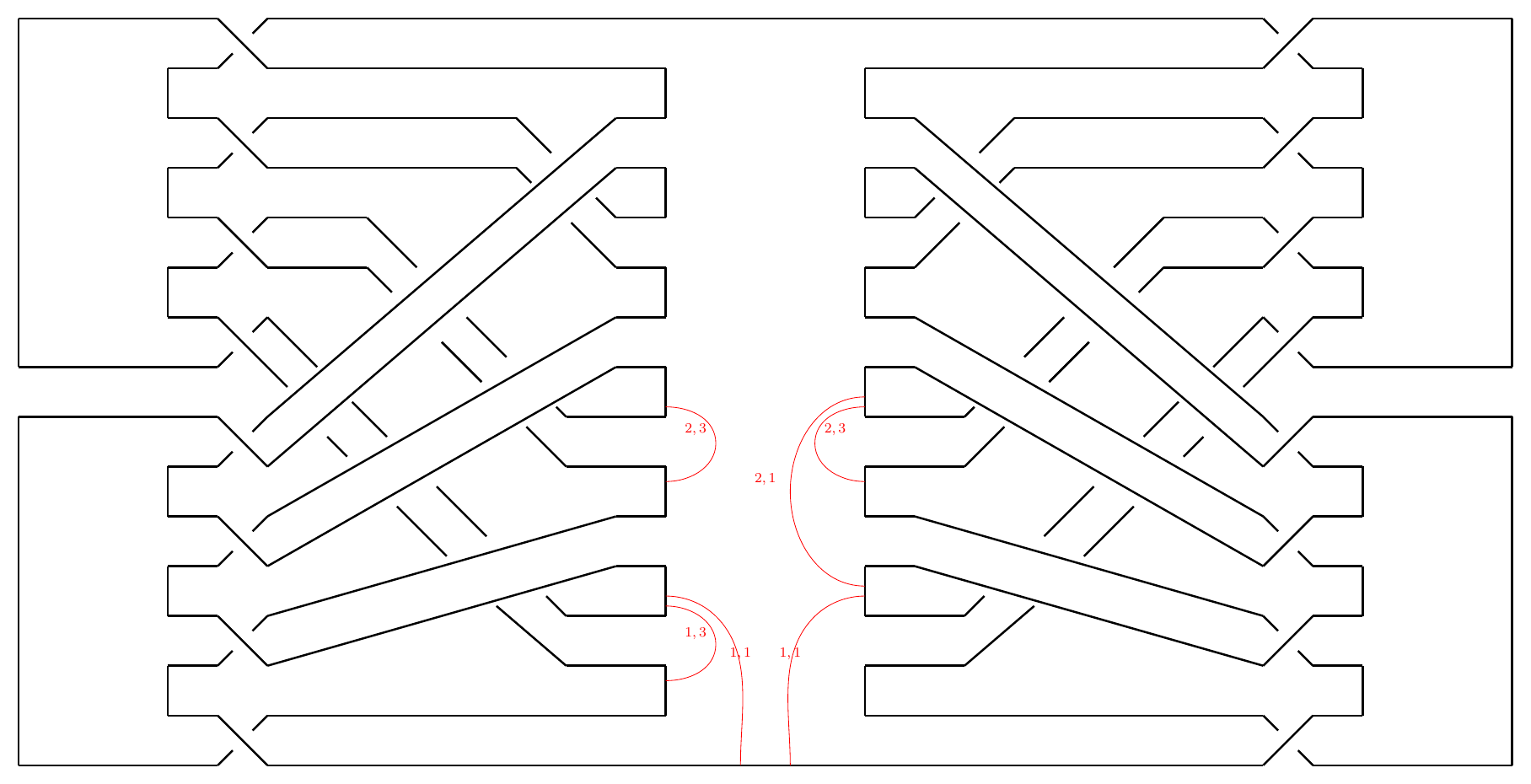}
\caption{The corresponding arcs of Figure \ref{fig:example computation} before the sliding process.}
\label{fig:example model arcs}  
\end{figure}

\begin{figure}[h!]
\centering
\includegraphics[width=0.8\linewidth]{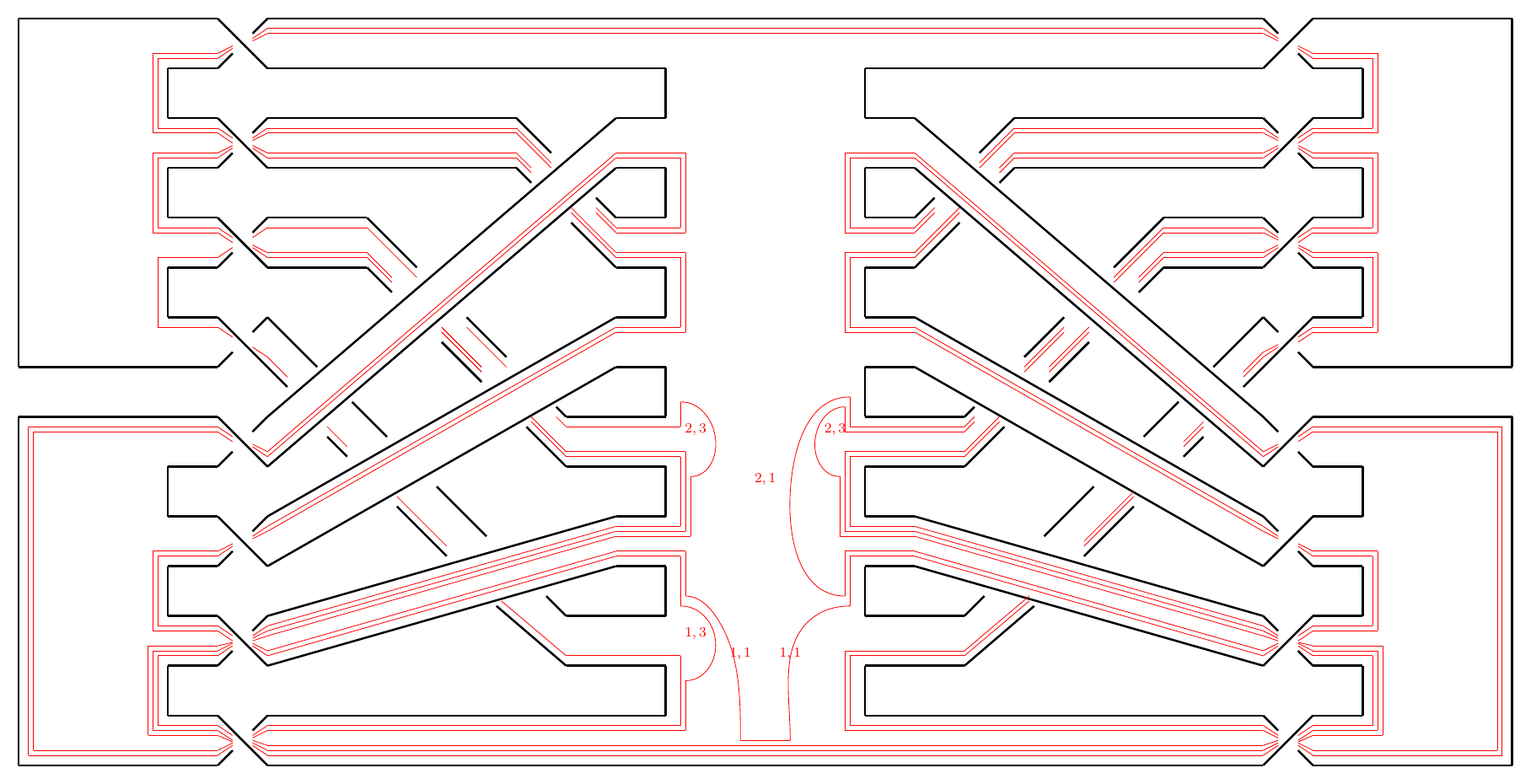}
\caption{The corresponding knot of Figure \ref{fig:example computation} obtained after the sliding process.}
\label{fig:example model arcs after slide}  
\end{figure}

\subsection{Sliding process}

\begin{figure}[h!]
\centering
\includegraphics[width=0.6\linewidth]{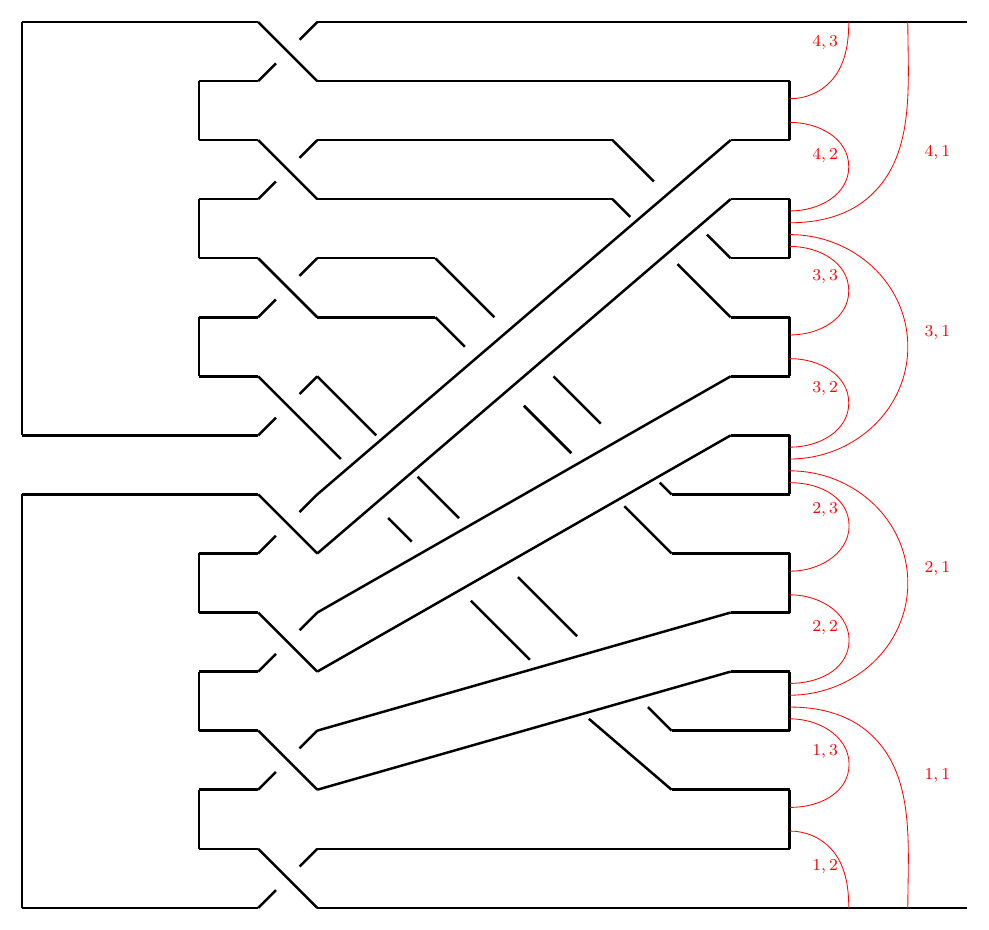}
\caption{For $p=4$ and $q = 3$, there are 12 model arcs in $F^+$.}
\label{fig:43normalgraph}  
\end{figure}

\begin{figure}[h!]
\centering
\includegraphics[width=0.6\linewidth]{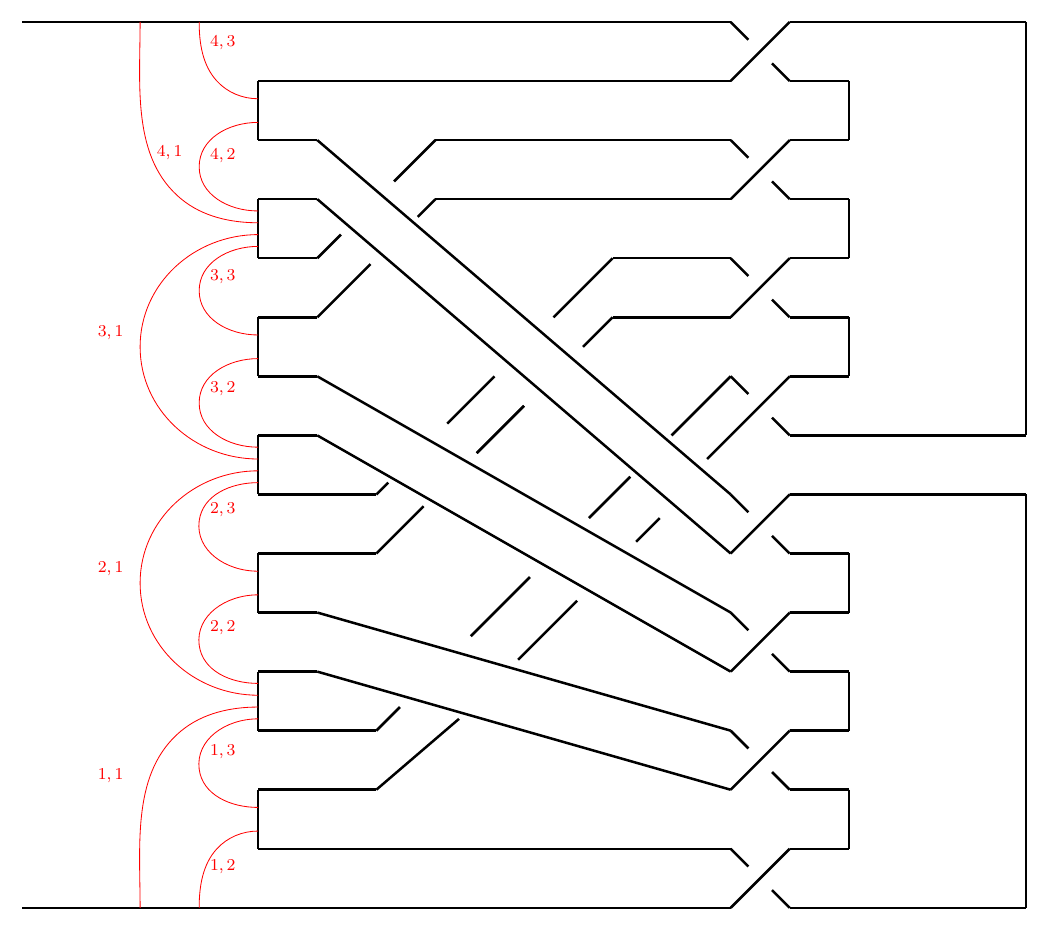}
\caption{For $p=-4$ and $q = 3$, there are also 12 model arcs in $F^-$.}
\label{fig:-43normalgraph}  
\end{figure}

\begin{figure}[h!]
\centering
\includegraphics[width=0.6\linewidth]{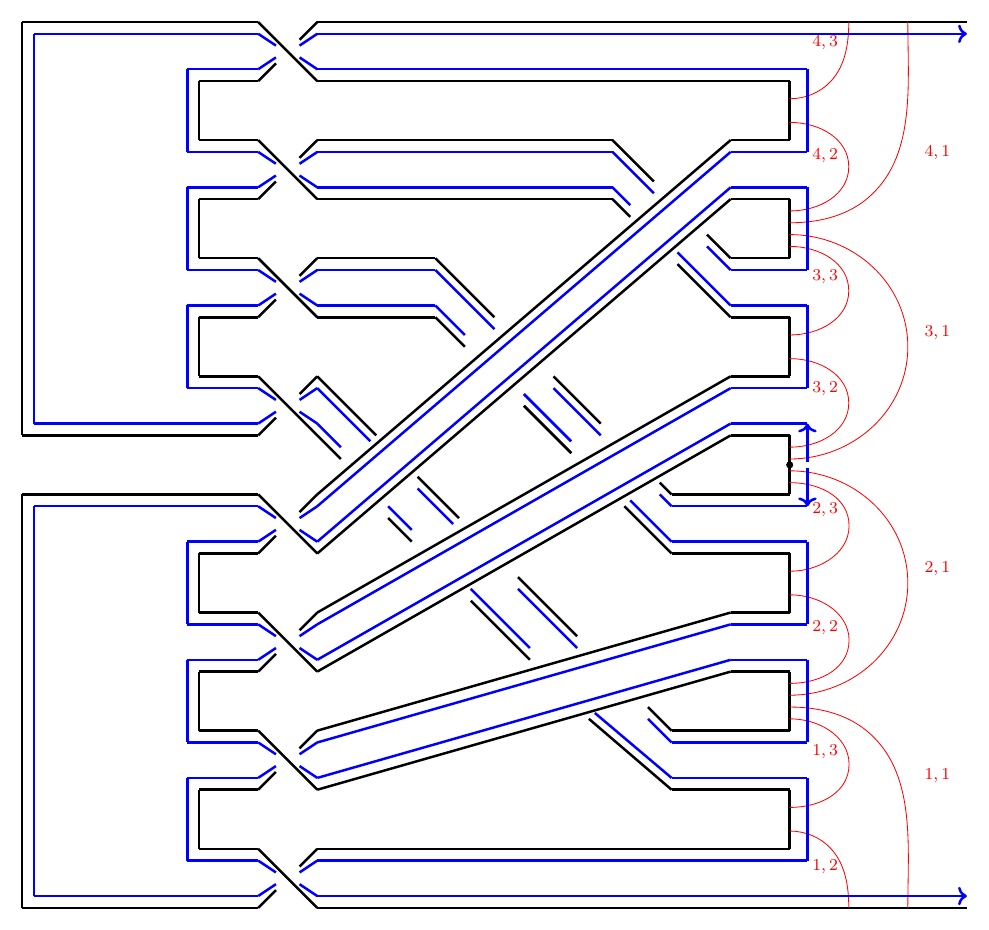}
\caption{For $p=4$ and $q = 3$, blue curves indicate how to slide the ends of each red arc}
\label{fig:43 slide}  
\end{figure}

\begin{figure}[h!]  
\centering
\includegraphics[width=0.6\linewidth]{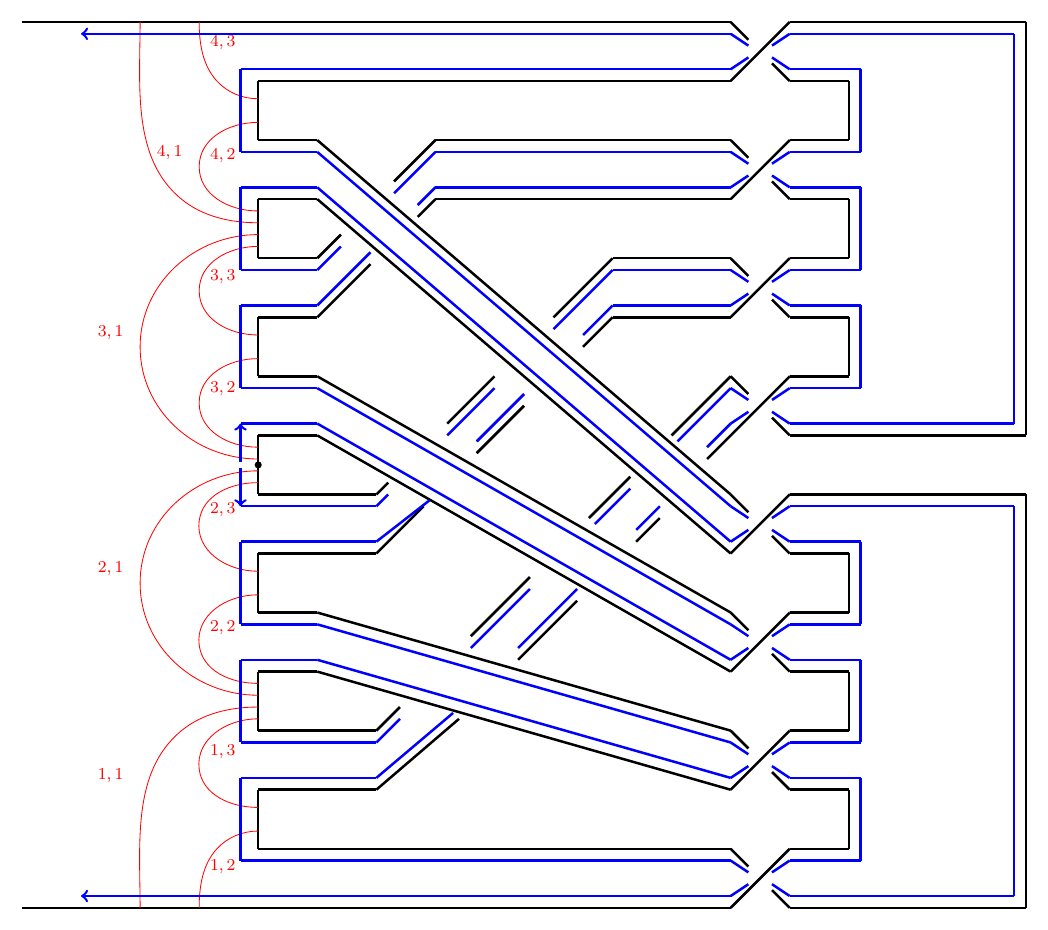}
\caption{For $p=-4$ and $q = 3$, blue curves indicate how to slide the ends of each red arc}
\label{fig:-43 slide}  
\end{figure}

\begin{figure}[h!]
\centering
\includegraphics[width=0.6\linewidth]{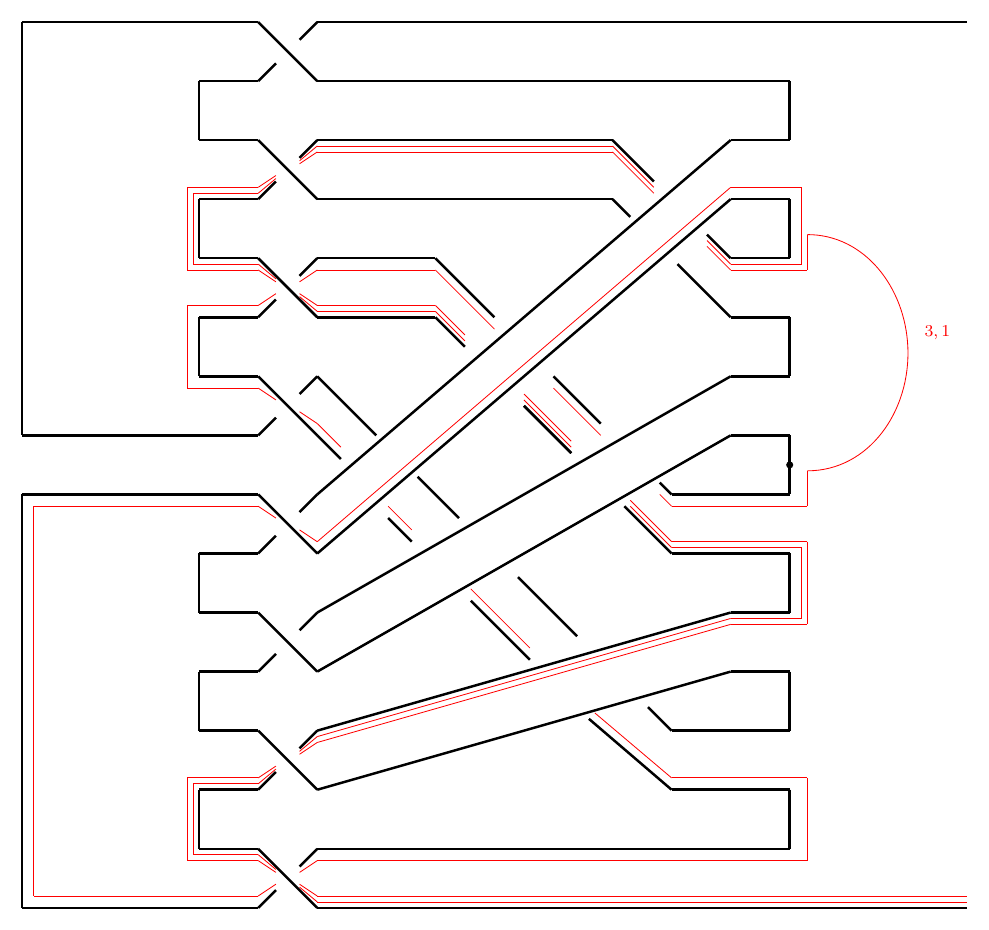}
\caption{For $p=4$ and $q = 3$, the result of sliding $3,1$ arc}
\label{fig:arc slide}  
\end{figure}

The previous step takes $p,q,\frac{c}{d}$ as input, and the number of each model arcs as output. Each of the  model arc labeled $a,b$ in Figure \ref{fig:43normalgraph} corresponds to  a pair of vertices of the form $((a,b,+),0,v)$ and $((a,b,-),0,d-v)$ for some $v$. Similarly in Figure \ref{fig:-43normalgraph}, each of the model arc labeled $a,b$ in its mirror surface $F^-$ corresponds to a pair of vertices of the form $((a,b,+),1,v)$ and $((a,b,-),1,d-v)$ for some $v$. 
To construct an actual link in $F_{p,q}$, we follow \cite{Generalized square knot, GST, Proposed} to slide both ends of each arc along the knot $Q_{p,q}$ so that the ends located near the boundary-connected sum arc in $F_{p,q}$. The sliding process is indicated in  Figure \ref{fig:43 slide} for the case $p=4, q=3$: the  red arcs are the model arcs in $F^+$ and a point on $T_{4,3}$ is chosen so that every end points above and below this point are pushed away along $T_{4,3}$ untill the end points are piled up like the end part of the blue arrowed arcs. The sliding process in the mirror surface $F^-$ is then similarly defined, see Figure \ref{fig:-43 slide}. For example, the result of sliding $3,1$ arc in $F^+$ is illustrated in Figure \ref{fig:arc slide}.

The number of copies of each model arc is essentially the output of the first step. Then applying the sliding process described above creates a pretty complicated collection of arcs in $F^+$, the two ends of each arc are then piled up like the two ends of the blue arrows in Figure \ref{fig:43 slide}. See also Figure \ref{fig:ends}, where we only draw the ends of the result of the slide.

\begin{figure}[h!]
\centering
\includegraphics[width=0.6\linewidth]{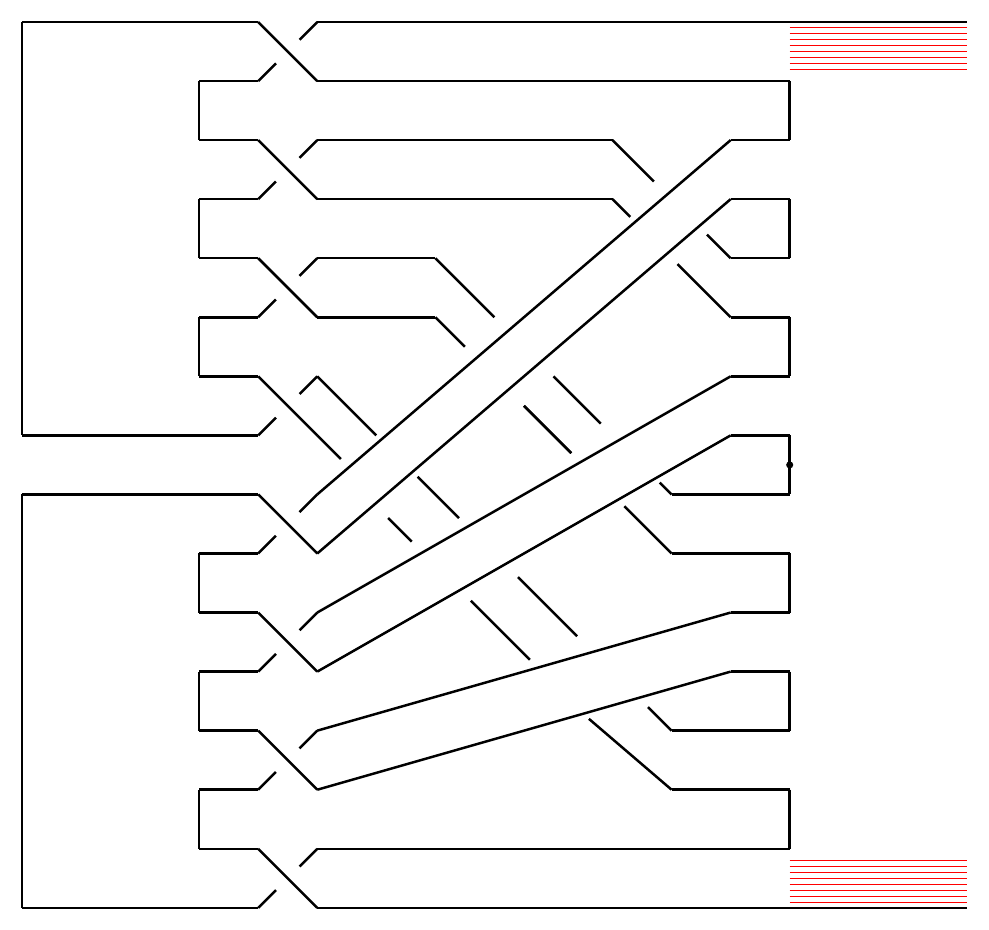}
\caption{For $p=4$ and $q = 3$, the ends of the slide arcs}
\label{fig:ends}  
\end{figure}

Thus we have described the sliding result of in the surface $F^+$.  The sliding process  on the mirrored surface $F^-$ is essentially the mirror of the same process in $F^+$. So the result is also a pretty complicated collection of arcs in $F^-$  with two ends of each arc piled up similar to that of the result of sliding arcs in $F^+$, see Figure \ref{fig: mirrorends}.

By construction in \cite{Generalized square knot, GST, Proposed}, connecting the ends of the red arcs in the natural way provides a single knot which we denoted by $V_{a,b,s,t,v}$ to indicate the is $((a,b,s),t,v)$. The desired 2-component link is $Q_{p,q} \cup V_{a,b,s,t,v}$ , see Figure \ref{fig: connecting ends}. For a concrete example, see Figure \ref{fig:example model arcs} and Figure \ref{fig:example model arcs after slide}.

\begin{figure}[h!]
\centering
\includegraphics[width=0.6\linewidth]{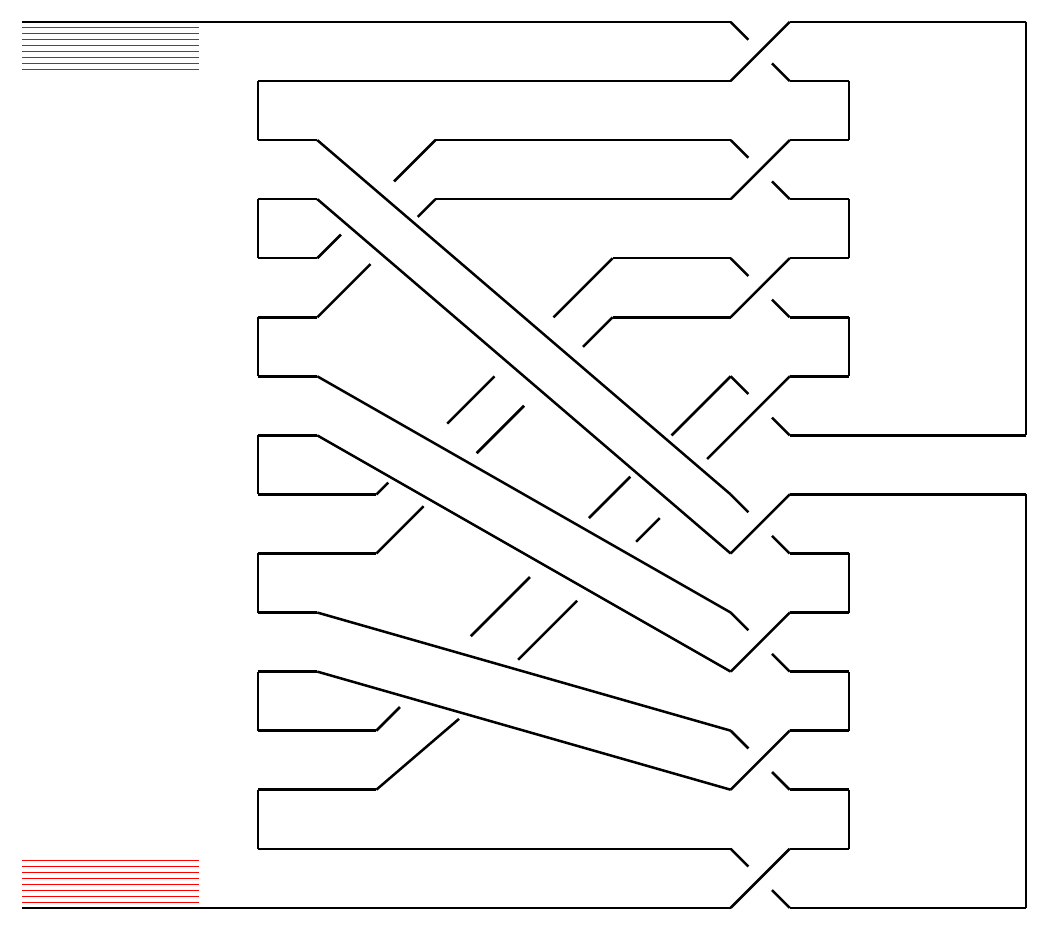}
\caption{For $p=-4$ and $q = 3$, the ends of the slide arcs}
\label{fig: mirrorends}  
\end{figure}
\begin{figure}[h!]  
\centering
\includegraphics[width=0.8\linewidth]{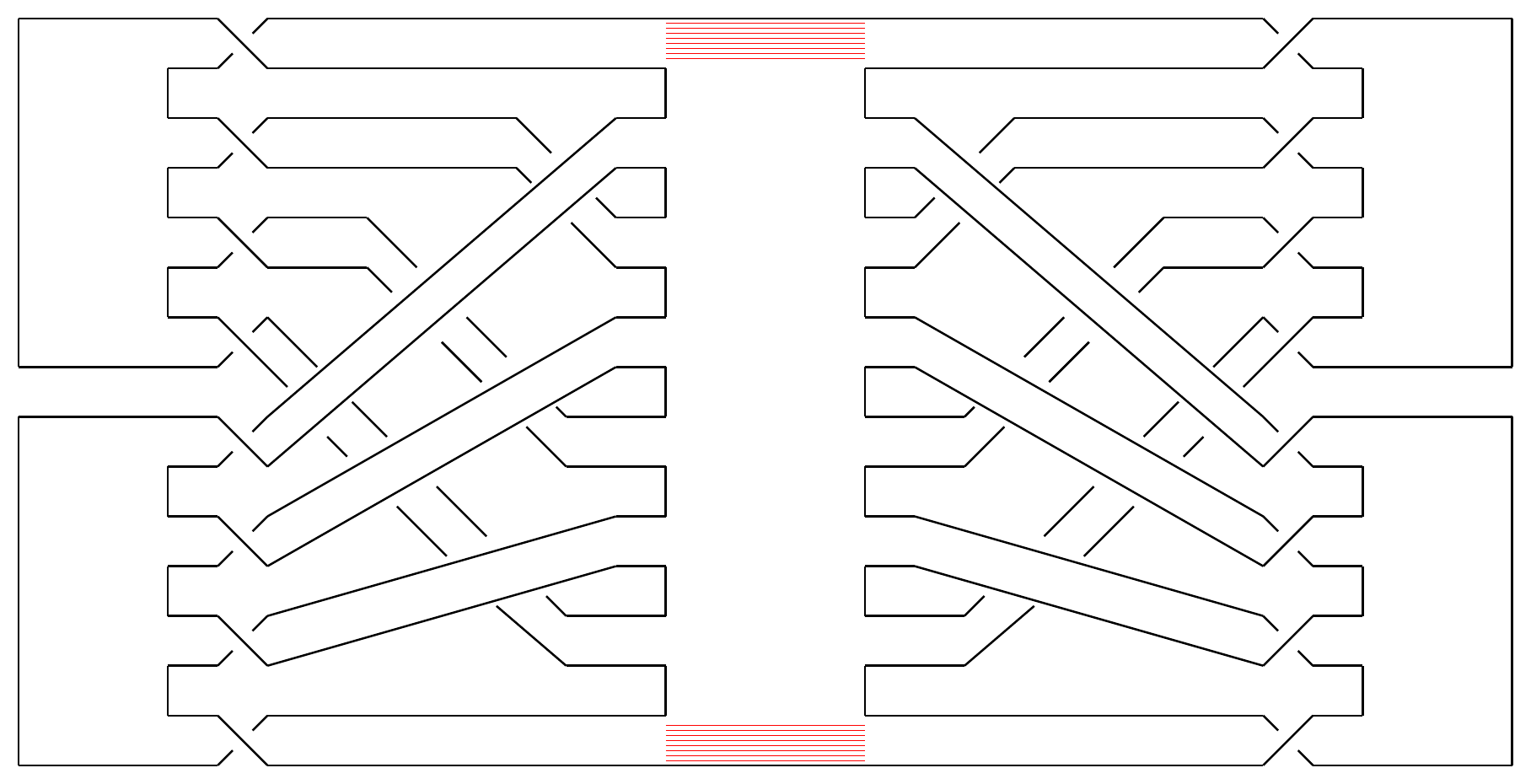}
\caption{For $Q_{4,3}$, connecting the ends of the red arcs in the natural way to produce the a link in $F_{p,q}$}
\label{fig: connecting ends}  
\end{figure}

\section{Verification of equivalence}

Given any $p,q$, by traversing all possible $((a,b,s),t,v)$, the algorithm in the previous section gives a family of 2-component links $\{Q_{p,q}\cup V_{a,b,s,t,v}\}$. More generally, a slight variants of the above algorithm can also process multiple input of $((a,b,s),s,t)$ to obtain links with more than two components. In this work, 3-component links  of the form $Q_{p,q} \cup V_{a_1,b_1,s_1,t_1,v_1} \cup V_{a_2,b_2, s_2, t_2, v_2}$ are particularly helpful to obtain stable equivalence. 

The strategy for verifying equivalence is the following proposition:

\begin{proposition}
Suppose a 3-component link $L=Q\cup V_1\cup V_2$ is a 3R link and $L'$ is a 2-component sublink of $L$ which is a 2R link. Then $L'$ is stably equivalent to $L$.
\end{proposition}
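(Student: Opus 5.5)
The plan is to show that $L$ is handleslide equivalent to $L'\sqcup U$, where $U$ is a $0$-framed unknot split from $L'$. That is exactly what ``stably equivalent'' means here. Write $L'=Q\cup V_1$ (the argument is the same if the sublink omits a different component), so the extra component is $V_2$.

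Since $L'$ is a 2R link, $0$-surgery on $L'$ gives $M=\#_2(S^2\times S^1)$. The remaining component $V_2$ survives as a framed knot $K\subset M$, and surgery on $K$ with the framing induced from the $0$-framing in $S^3$ gives $0$-surgery on all of $L$, namely $\#_3(S^2\times S^1)$. By Gabai's theorem on surgery in $\#_n(S^2\times S^1)$ (the generalization of Property R used in \cite{GST}: if surgery on a knot in $\#_n(S^2\times S^1)$ yields $\#_{n+1}(S^2\times S^1)$, then the knot is isotopic to an unknot in a $3$-ball with the trivial framing), $K$ bounds an embedded disk $D\subset M$ and its framing is the disk framing.

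The next step is to translate the isotopy of $K$ in $M$ into moves in $S^3$. An isotopy of $K$ in the surgered manifold lifts to a sequence of isotopies of $V_2$ in the complement of $L'$ together with handleslides of $V_2$ over $Q$ and $V_1$: each time the isotopy passes through a surgery solid torus, we slide over the corresponding $0$-framed component. This is the standard dictionary from Kirby calculus, and slides of $V_2$ over the other components do not change $L'$. After these moves, $V_2$ becomes a knot $V_2'$ that bounds a disk in $S^3\setminus L'$. So $V_2'$ is an unknot split from $L'$, and its $0$-framing is preserved because all framings in an R-link are $0$ and handleslides between $0$-framed components of an R-link keep them $0$-framed (the linking numbers all vanish).

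The main obstacle is that lifting step: checking that an isotopy through the surgery tori really lifts to slides only of $V_2$, and that the disk found in $M$ can be pushed off the cores of the surgery tori so that it lies in $S^3\setminus L'$. I would handle this by general position: make $D$ transverse to the cores, then remove each intersection point by sliding $V_2$ over the corresponding component along an arc in $D$. This yields $L\sim L'\sqcup U$, hence $L'$ and $L$ are stably equivalent.
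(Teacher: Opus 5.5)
Your proposal is correct and follows the same route as the paper. Both arguments apply the theorem that a knot in $\#_2(S^2\times S^1)$ with a surgery yielding $\#_3(S^2\times S^1)$ is a $0$-framed unknot in a $3$-ball, then convert that isotopy in $S^3_{L'}(0)$ into handleslides of the third component over $L'$, giving $L\sim L'\sqcup$ unknot; your disk-transversality argument just spells out the lifting step the paper states in one line.
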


\begin{proof}
Let $K$ be the knot in $L$ which is not a component of $L'$. By Theorem 5.1 of \cite{Classification}, $K$ is isotopic in $S^3_{L'}(0) = \#^2 S^1\times S^2$ to an unknot contained in a 3-ball. Since isotopy of $K$ in $S^3_{L'}(0)$ can be realized as handleslides of $K$ over $L'$, it follows that $L$ is handleslide equivalent to $L' \sqcup \text{unknot}$. So $L$ is stably equivalent to $L'$.

\end{proof}

We can use Proposition 3.1 to verify Theorem1.1.

By using the verified computation, isometry signature, of snapPy \cite{snappy} inside SageMath \cite{sage} with our construction of links of the form $Q_{p,q} \cup V_1 \cup V_2$, we find that for each $1\le d\le 39$, there is some 2-component sublink of $L(3,2;\frac{4}{d})$ isotopic to some 2-component sublink of $L(3,2;\frac{4}{d+2})$.
So $L(3,2;\frac{4}{d})$ is stably equivalent to $L(3,2;\frac{4}{d+2})$ for $d\le 39$. Similarly, $L(3,2;\frac{4}{5})$ and $L(3,2;\frac{2}{3})$ also share an isotopic link and  $L(3,2;\frac{2}{3})$ is handleslide trivial, it follows that each $L(3,2;\frac{4}{d})$ is stably handleslide trivial for $d\le 39$.

Similarly, $L(3,2; \frac{6}{6n\pm 1})$ is stably handleslide equivalent to $L(3n\pm 1, 3; \frac{2}{3})$ for $n\le 5$ and GST link $L_n$ is stably handleslide equivalent to $L(n+1,n; \frac{2}{3})$ for $n\le 36$.

\end{document}